\documentclass[preprint,12pt]{elsarticle}

\usepackage{natbib}

\usepackage{graphicx}%
\usepackage{multirow}%
\usepackage{amsmath,amssymb,amsfonts}%
\usepackage{amsthm}%
\usepackage{mathrsfs}%
\usepackage[title]{appendix}%
\usepackage{xcolor}%
\usepackage{textcomp}%
\usepackage{manyfoot}%
\usepackage{booktabs}%
\usepackage{algorithm}%
\usepackage{algorithmicx}%
\usepackage{algpseudocode}%
\usepackage{listings}%
\usepackage{caption}
\usepackage[utf8]{inputenc}
\usepackage{comment}

\begin{document}

\newcommand{\Z}{{\mathbb Z}}
\newcommand{\N}{{\mathbb N}}
\newcommand{\Hi}{{\mathbb H}}
\newcommand{\R}{{\mathbb R}}
\newcommand{\Q}{{\mathbb Q}}
\newcommand{\ICG}{\mathrm{ICG}}
\newcommand{\WCG}{\mathrm{WCG}}
\newcommand{\WICG}{\mathrm{WICG}}

\newtheorem{theorem}{\bf Theorem}[section]
\newtheorem{corollary}[theorem]{\bf Corollary}
\newtheorem{lemma}[theorem]{\bf Lemma}
\newtheorem{proposition}[theorem]{\bf Proposition}
\newtheorem{conjecture}[theorem]{\bf Conjecture}
\newtheorem{remark}[theorem]{\bf Remark}
\newtheorem{problem}[theorem]{\bf Problem}
\newtheorem{definition}[theorem]{\bf Definition}

\newcommand{\QED} {\hfill$\square$}

\def\slika #1{\begin{center} \epsffile{#1} \end{center}}

\begin{frontmatter}

\title{Further results on the lower bound on reduced Zagreb index of trees}

\author[address1]{Milan Ba\v si\'c\corref{mycorrespondingauthor}}
\cortext[mycorrespondingauthor]{Corresponding author}
\ead{basic\_milan@yahoo.com}
\address[address1]{University of Ni\v s, Ni\v s, Serbia}

\author[address2]{Aleksandar Ili\'c}
\ead{aleksandari@gmail.com}
\address[address2]{Meta Inc, Menlo Park, USA}

\begin{abstract}
For a graph $G$, the general reduced second Zagreb index is defined as
$$GRM_\lambda (G) = \sum_{uv \in E} (deg(u) + \lambda) (deg(v) + \lambda),$$
where $\lambda$ is an arbitrary real number and $deg (v)$ is the degree of the vertex $v$.

In this paper, we extend and correct the equality results from [N. Dehgardia, S. Klav\v zar, {\it Improved lower bounds on the general reduced second Zagreb index of trees}, preprint (2023)] regarding the minimal value of $GRM_\lambda$ for $\lambda \geq -1$ among trees with $n$ vertices and a maximal degree $\Delta$. Furthermore, we complement these results with two distinct approaches to determine the minimum value of the general reduced second Zagreb index for molecular trees with $\Delta = 3$ and $\Delta = 4$ in $\lambda = -2$, and characterize the extremal trees.
\end{abstract}

\begin{keyword}
Zagreb index, Molecular trees, Extremal values
\MSC 05C09 \sep 11A07 \sep 	90C27 \sep 	90C10
\end{keyword}

\end{frontmatter}


\section{Introduction}

Vertex degree-based (VDB) topological indices have emerged as central tools in both mathematical graph theory and chemical graph theory because to their ability to encode structural information of molecular graphs. These indices are particularly useful for modeling the physicochemical properties of compounds and developing quantitative structure-property and activity relationships (QSPR / QSAR). Among them, the \emph{first} and \emph{second Zagreb indices}, introduced by Gutman and Trinajstić in the 1970s \cite{gutman1972,gutman1975}, are among the earliest and most extensively studied VDB indices. Over the years, numerous other indices such as the Randić index, the Forgotten index, the Sombor index, and many others have been introduced, each capturing various structural aspects of graphs \cite{gutman2013,furtula2018,nikolic2003}.

More recently, Furtula et al.~\cite{furtula2014} proposed the \emph{reduced second Zagreb index}, defined as
\[
RM_2(G) = \sum_{uv \in E(G)} (\deg(u) - 1)(\deg(v) - 1),
\]
which reflects the structural discrepancy between the second and first Zagreb indices.
The reduced second Zagreb index has been further studied in several works, focusing on its extremal properties and applications to various graph classes \cite{buyantogtokh2020,gao2020,shafique2017}.

Motivated by this, Horoldagva et al.~\cite{horoldagva2019} introduced the \emph{general reduced second Zagreb index} (GRM), given by
\[
GRM_\lambda(G) = \sum_{uv \in E(G)} (\deg(u) + \lambda)(\deg(v) + \lambda),
\]
where $\lambda$ is an arbitrary real number and $deg (v)$ is the degree of the vertex $v$.
This invariant generalizes both $M_2(G)$ and $RM_2(G)$, and satisfies the identity
\[
GRM_\lambda(G) = M_2(G) + \lambda M_1(G) + \lambda^2 |E(G)|.
\]
The index $GRM_\lambda$ unifies various Zagreb-type indices and allows for a flexible parametric framework that has found relevance in both theoretical and applied studies.

The study of GRM has progressed rapidly. Horoldagva et al.~\cite{horoldagva2019} initiated the investigation of extremal properties of $GRM_\lambda$ for graphs with a fixed number of cut edges, providing sharp upper bounds and characterizing extremal graphs for $\lambda \geq -\frac{1}{2}$. Later, Buyantogtokh et al.~\cite{das2022} extended this investigation by deriving lower and upper bounds of $GRM_\lambda$ for trees, unicyclic graphs, and graphs with prescribed girth, independence number, or chromatic number. Their results highlighted the role of structural graph features in determining extremal behavior under $GRM_{\lambda}$, including sharp characterizations for Turán graphs, complete multipartite graphs, and graphs with cut vertices. 

In a more recent work, the GRM index was also examined in the context of graph operations. In \cite{khoeilar2021}, the authors computed $GRM_\lambda$ for Cartesian products, corona products, joins, and other graph compositions, further demonstrating the robustness of GRM under structural transformations.


In this paper, we contribute to this line of research by extending and correcting the equality conditions presented in \cite{dehgardia2023} regarding the minimum value of $GRM_\lambda$ for $\lambda \geq -1$ among trees of given order $n$ and maximum degree $\Delta$. Furthermore for $\lambda = -2$, we study the extremal behavior of $GRM_\lambda$ among molecular trees with $\Delta = 3$ and $\Delta = 4$, providing two distinct techniques for identifing the extremal configurations.

\section{Minimum value of $GRM_{\lambda}$ in the class of trees for $\lambda\geq -1$}

For $\Delta = 2$ the unique tree is a path $P_n$, with 
$$
GRM_{\lambda} (P_n) = (2+\lambda)(n \lambda + 2n -\lambda - 4)
$$

For $\Delta = n - 1$ the unique tree is a star $S_n$ 
$$
GRM_{\lambda} (S_n) = (n - 1) (n - 1 + \lambda) (1 + \lambda).
$$

Let $SP(n, \Delta)$ be a spider with $n$ vertices, maximal degree $\Delta \geq 3$ and at most one leg of length more than one. It holds
$$
GRM_{\lambda} (SP(n, \Delta)) = 
(n \lambda + 2 n - \Delta \lambda - \Delta - 3)(2 + \lambda) + (\Delta - 1)(\Delta + \lambda)(1 + \lambda).
$$

Let $BR(n, \Delta, \Delta')$ be a broom with $n$ vertices, obtained from a path $P_{n - \Delta - \Delta' + 2}$ by attaching $\Delta - 1$ pendent vertices to one endpoint of the path and $\Delta' - 1$ pendent vertices to the other endpoint. The maximum degree of $BR(n, \Delta, \Delta')$ is equal to $\Delta \geq \Delta' \geq 2$ and $n \geq \Delta + \Delta'$. Note that $SP(n, \Delta) \equiv BR (n, \Delta, 2)$.


\begin{theorem}
Let $\lambda \geq -1$ and $n \geq 4$ and $3 \leq \Delta \leq n - 2$. For a tree $T$ on $n$ vertices and the maximal vertex degree $\Delta$, it holds
$$
GRM_{\lambda} (T) \geq
(n \lambda + 2 n - \Delta \lambda - \Delta - 3)(2 + \lambda) + (\Delta - 1)(\Delta + \lambda)(1 + \lambda).
$$
The equality holds iff $T \equiv SP(n, \Delta)$ for $\lambda \geq -1$, and $T \equiv BR(n, \Delta, \Delta')$ for $\lambda = -1$ with $\Delta \geq \Delta' \geq 2$.
\end{theorem}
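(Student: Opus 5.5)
The plan is to substitute $\mu=\lambda+1\ge 0$ and split $GRM_\lambda$ into three pieces, each minimized separately. Writing $d_v=\deg(v)$ and $d_v+\lambda=(d_v-1)+\mu$, every edge term expands as $(d_u-1)(d_v-1)+\mu\bigl((d_u-1)+(d_v-1)\bigr)+\mu^2$. Summing over the $n-1$ edges, and using $\sum_{uv\in E}\bigl((d_u-1)+(d_v-1)\bigr)=\sum_v d_v(d_v-1)=M_1(T)-2(n-1)$, gives
$$GRM_\lambda(T)=RM_2(T)+\mu\bigl(M_1(T)-2(n-1)\bigr)+\mu^2(n-1).$$
The last term is constant. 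I will show that $RM_2(T)\ge n-3$ and that $M_1(T)$ is minimized by the degree sequence $(\Delta,2,\dots,2,1,\dots,1)$. Both bounds are attained by $SP(n,\Delta)$, so the stated bound follows. That the right-hand side of the theorem equals $GRM_\lambda(SP(n,\Delta))$ is the formula already computed above. As a check, it gives $n-3$ at $\lambda=-1$, matching $RM_2(SP)=(\Delta-1)+(n-\Delta-2)$.

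\emph{Bound on $RM_2$.} Let $T'$ be the subtree induced by the non-leaves of $T$. It has $k\ge 2$ vertices, since $\Delta\le n-2$ excludes the star. Edges incident to leaves contribute $0$ to $RM_2$. For an edge $uv$ of $T'$, put $a=d_u-1\ge1$ and $b=d_v-1\ge1$; then $ab\ge a+b-1$, with equality iff $a=1$ or $b=1$. Summing over the $k-1$ edges of $T'$ gives
$$RM_2(T)\ge \sum_{v\in T'}\deg_{T'}(v)(d_v-1)-(k-1).$$
Since $\deg_{T'}(v)\ge1$ and $d_v\ge 2$, we have $\deg_{T'}(v)(d_v-1)\ge (d_v-1)+(\deg_{T'}(v)-1)$, with equality iff $\deg_{T'}(v)=1$ or $d_v=2$. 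Using $\sum_{v\in T'}(d_v-1)=\sum_{v\in T}(d_v-1)=n-2$ and $\sum_{v\in T'}\deg_{T'}(v)=2(k-1)$, the right-hand side is at least
$$(n-2)+2(k-1)-k-(k-1)=n-3.$$
Equality requires two things. First, every vertex of degree at least $3$ has $T'$-degree $1$, so $T'$ is a path whose interior vertices all have degree $2$. Second, no edge of $T'$ joins two vertices of degree at least $3$. Together these say exactly that $T$ is a broom $BR(n,\Delta,\Delta')$ with $\Delta\ge\Delta'\ge2$. Adjacent hubs are excluded by the second condition; that configuration is not among the brooms anyway, since the defining path has at least two vertices only for $n\ge\Delta+\Delta'$. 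This settles the case $\lambda=-1$, i.e.\ $\mu=0$, including its equality characterization.

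\emph{Bound on $M_1$.} One vertex has degree $\Delta$, and the remaining $n-1$ degrees are integers $\ge1$ with fixed sum $2(n-1)-\Delta$. By convexity of $x^2$, replacing two degrees $p\ge q+2$ by $p-1,q+1$ strictly decreases $\sum d_v^2$. Hence the unique minimizing multiset has all remaining degrees in $\{1,2\}$, namely $(\Delta,2,\dots,2,1,\dots,1)$, which $SP(n,\Delta)$ realizes. For $\mu>0$, equality in the theorem therefore needs this degree sequence, so $T$ is a spider with $\Delta$ legs. It also needs $RM_2(T)=n-3$, which by the previous paragraph forces $T'$ to be a path, so at most one leg has length greater than $1$. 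This gives exactly $SP(n,\Delta)$.

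The main obstacle is the sharp inequality $RM_2(T)\ge n-3$ together with tracking its equality cases. The two-step use of $(p-1)(q-1)\ge0$ above is the route I would try first. The $M_1$ part and the algebraic decomposition are routine.
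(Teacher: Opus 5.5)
Your proof is correct, and it takes a genuinely different route from the paper.

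\textbf{How the routes differ.} The paper argues by induction on $n$. It deletes a pendent vertex $v$ whose neighbour $w$ is not the maximum-degree vertex. It then shows, using $\lambda\ge -1$, that $GRM_\lambda(T)-GRM_\lambda(T-v)\ge(\lambda+2)^2$, which is exactly the increment $GRM_\lambda(SP(n,\Delta))-GRM_\lambda(SP(n-1,\Delta))$. Equality cases are traced by asking where a pendent vertex can be reattached to an extremal tree of order $n-1$. You instead use the global identity $GRM_\lambda=RM_2+\mu\,(M_1-2(n-1))+\mu^2(n-1)$ with $\mu=\lambda+1\ge 0$, and minimize the two pieces separately, each with an explicit equality condition.

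\textbf{What each approach buys.} Your route shows that the $\lambda=-1$ bound $RM_2\ge n-3$ holds for every non-star tree, independently of $\Delta$. It also explains why, within $\lambda\ge -1$, only $\lambda=-1$ admits extra extremal trees: there the $M_1$ term drops out, and its unique minimizing degree sequence is what forces a spider. Finally, it gives the equality characterization directly, instead of through the paper's rather informal reattachment argument. The paper's induction is shorter for the inequality itself and is purely local.

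\textbf{One incorrect sentence.} The adjacent-hub configuration \emph{is} among the brooms. For $n=\Delta+\Delta'$ the defining path is $P_2$, so $BR(\Delta+\Delta',\Delta,\Delta')$ is the double star with adjacent centres. For $\Delta'\ge 3$ it has
$$RM_2=(\Delta-1)(\Delta'-1)=n-3+(\Delta-2)(\Delta'-2)>n-3.$$
For example, $BR(6,3,3)$ gives $4>3$. So this tree is not extremal, and your second equality condition correctly excludes it. What your argument actually proves at $\lambda=-1$ is: equality holds iff $T\equiv BR(n,\Delta,\Delta')$ with $\Delta'=2$ or $n\ge\Delta+\Delta'+1$. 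This is also what the last line of the paper's proof obtains. The theorem as literally stated, with the broom range $n\ge\Delta+\Delta'$, is therefore slightly too generous. You should flag this discrepancy rather than assert that the two families coincide.
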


\begin{proof}
The proof is based on the induction of $n$. For the base case $n = \Delta + 2$, the only such tree is a star $S_{n-1}$ with a pendent vertex attached to a vertex of degree 1.

Assume that $T$ is an arbitrary tree on $n$ vertices, with a fixed vertex $x$ of degree $3 \leq \Delta \leq n - 2$. Since $T \not \equiv S_n$, let $v$ be a pendent vertex that is adjacent to a vertex $w \neq x$.

Let $T' = T - v$ be a tree with $n - 1$ vertices obtained by removing the pendent vertex $v$. Obviously, the maximum degree of the vertex of $T'$ remains $\Delta$. By definition of the general reduced second Zagreb index, we get
\begin{eqnarray}
\label{GRM_leaf}
GRM_{\lambda}(T) = GRM_{\lambda}(T') + (\lambda+1)(\lambda+\deg(w)) + \sum_{(w, w') \in E(T')} (\lambda + \deg(w')).
\end{eqnarray}

By expanding the last summation, the following holds
$$
 GRM_{\lambda}(T) - GRM_{\lambda}(T') \geq (\lambda+1)(\lambda+\deg(w)) + (\deg(w)-2)(\lambda+1)+(\lambda+\deg(u))
$$
with equality iff all neighbors of $w$ have degree 1 except for exactly one vertex $u$ that has degree greater than or equal to 2.

After rearranging the right-hand side of the previous inequality, we get
 \begin{eqnarray*}
 GRM_{\lambda}(T) - GRM_{\lambda}(T') &\geq&
 (\lambda+2)^2 + 2 (\deg(w)-2)(\lambda+1)+(\deg(u)-2) \\
 &\geq& (\lambda+2)^2,
\end{eqnarray*}
with equality iff in addition to the above it holds $\lambda=-1$ or $\deg(w)=2$, and the degree of $u$ being equal to 2.

By applying the induction hypothesis,
$$
GRM_{\lambda}(T') \geq GRM_{\lambda}(SP(n-1, \Delta)),
$$
and finally get
$$
GRM_{\lambda}(T) \geq GRM_{\lambda}(SP(n-1, \Delta)) + (\lambda+2)^2 = GRM_{\lambda}(SP(n, \Delta)),
$$
which implies that $T$ is a spider with at most one leg having a length greater than one.

The equality holds if and only if all neighbors of $w$ have a degree equal to 1, except for the vertex $u$ such that $\deg(u) = 2$, and $\deg(w) = 2$ or $\lambda = -1$. By the induction hypothesis, it is necessary to reattach a pendent vertex $v$ to the vertex $w \neq x$ of $T'$.

Assume that $\deg(w) = 2$ and $\lambda > -1$. Since $\deg(u) = 2$, the only possibility of such attachment to $T' \equiv SP (n-1, \Delta)$ is at a pendent node $w$ of the long leg, which means that $T \equiv SP (n, \Delta)$. 

Assume that $\lambda = -1$. From the induction hypothesis, a pendent vertex $v$ can be reattached to the vertex of degree $\Delta'$ in $BR(n - 1, \Delta, \Delta')$ as long as $\Delta > \Delta'$. It finally follows that for the special case $\lambda = -1$ the equality holds for $T \equiv SP(n, \Delta)$ or $T \equiv BR(n, \Delta, \Delta')$ with $n \geq \Delta + \Delta' + 1$ and $\Delta \geq \Delta'$.
\end{proof}

Note that in \cite{dehgardia2023}, the extremal trees for the edge case of $\lambda = -1$ were not completely identified.

\label{S:1}

Let $m_{i,j}$ be the number of edges in a graph $G$ where the incident vertices have degrees $i$ and $j$.
By $(i,j)$ we denote an edge of a graph whose incident vertices have degrees $i$ and $j$.
Furthermore, we use $n_i$ to represent the number of vertices with a degree of $i$.

\bigskip

\section{Minimum value of $GRM_{-2}$ in the class of molecular trees with  maximal degree 3}

Let $T^1_{opt}(k)$ represent a unique tree of order $n = 3k + 1$, consisting of a path $P_{2k+1} = v_1 v_2 \ldots v_{2k+1}$ with exactly one pendent vertex attached to each vertex $v_i$ for even $1 \leq i \leq 2k+1$.

Based on established relationships involving $m_{i,j}$ and $n_i$, it is possible to calculate the values
$n_1=k+2$, $n_2=k-1$, $n_{3}=k$, $m_{13}=k+2$ and $m_{23}=2k-2$, for $T_{\text{opt}}^1(k)$ with the order $n=3k+1$.
Considering that only the edges $(1,3)$ contribute to the sum $GRM_{-2}(T_{\text{opt}}^1(k))$, the resulting value is therefore

\begin{eqnarray}
\label{Zagreb_opt}
GRM_{-2}(T_{\text{opt}}^1(k))=-(k+2).
\end{eqnarray}

Let $T^2_{opt} (k)$ denote a family of $\lfloor k / 2 \rfloor$ trees of order $n = 3k+2$, composed of $T^{1}_{opt} (k)$ by subdividing one edge $v_iv_{i+1}$, for odd $3\leq i \leq 2k-1$.

Based on established relationships involving $m_{i,j}$ and $n_i$, it is possible to calculate the values
$n_1=k+2$, $n_2=k$, $n_{3}=k$, $m_{13}=k+2$, $m_{22}=1$ and $m_{23}=2k-2$, for $T_{\text{opt}}^2(k)$ with the order $n=3k+2$.
Considering that only the edges $(1,3)$ contribute to the sum $GRM_{-2}(T_{\text{opt}}^2(k))$, the resulting value is therefore

\begin{eqnarray}
\label{Zagreb_opt_2}
GRM_{-2}(T_{\text{opt}}^2(k))=-(k+2).
\end{eqnarray}

Let $T^3_{opt}(k)$ denote a family of trees of order $n = 3k + 3$, obtained from $T^2_{opt}(k)$ either by subdividing an edge $v_i v_{i+1}$ such that $deg(v_i) \geq 2$ and $deg(v_{i+1}) = 2$, or by attaching two pendent vertices to $T^1_{opt}(k)$ at $v_1$, or by attaching a single pendent vertex to $T^2_{opt}(k)$ at a vertex  $v_i$ or $v_{i+1}$ for which $deg(v_i) = deg(v_{i+1}) = 2$. 


Based on established relationships involving $m_{i,j}$ and $n_i$, it is possible to compute the values
$n_1=k+2$, $n_2=k+1$, $n_{3}=k$, $m_{13}=k+2$, $m_{22}=2$ and $m_{23}=2k-2$, for $T_{\text{opt}}^3(k)$ of order $n=3k+3$, as defined in the first case.
Considering that only the edges $(1,3)$ contribute to the sum $GRM_{-2}(T_{\text{opt}}^3(k))$, the resulting value is therefore

\begin{eqnarray}
\label{Zagreb_opt_3}
GRM_{-2}(T_{\text{opt}}^3(k))=-(k+2).
\end{eqnarray}

Similarly, the parameters of trees in $T_{\text{opt}}^3(k)$ with order $n=3k+3$ can be determined according to the second and third case. These parameters are given as follows: $n_1=k+3$, $n_2=k-1$, $n_3=k+1$, $m_{13}=k+3$, $m_{33}=1$, and $m_{23}=2k-2$. The corresponding value of $GRM_{-2}$ is given by $-(k+2)$.

\bigskip

The expression for $GRM_{\lambda}(T)$ can be restated as follows

\begin{eqnarray*}
GRM_{\lambda}(T) &=& \sum_{1\leq i\leq j\leq \Delta} m_{i,j} (\lambda+i)(\lambda+j)\\
  &=& \sum_{1\leq i\leq j\leq \Delta}(\lambda^2+\lambda (i+j)+ij) m_{i,j}.
\end{eqnarray*}
We observe that $GRM_{-2}(T)$ can be expressed as 
\begin{eqnarray}
\label{GRM_{-2}(T)}
GRM_{-2}(T)=\sum_{1\leq i\leq j\leq \Delta}a_{ij} m_{i,j},
\end{eqnarray}

where $a_{ij} = 4 - 2(i+j) + ij$.

Given that  $a_{13} = -1$, $a_{33} = 1$ and $a_{2j}=0$, we have that 
$$
GRM_{-2}(T)=m_{33}- m_{13},
$$
for any tree $T$ with a maximal degree of $\Delta=3$. Our task is to determine the minimum value of $m_{33}- m_{13}$.
Indeed, the following assertion is true.

\begin{theorem}
Let $T$ be a tree on $n \geq 7$ vertices and maximum degree 3. The following inequality holds
$$GRM_{-2}(T) =  m_{33} - m_{13} \geq -(k + 2),$$
where equality holds if and only if
\begin{itemize}
\item if $n = 3k+ 1$, then $T \equiv T^1_{opt} (k)$,
\item if $n = 3k+ 2$, then $T \equiv T^2_{opt} (k)$,
\item if $n = 3k+ 3$, then $T \equiv T^3_{opt} (k)$.
\end{itemize}
\end{theorem}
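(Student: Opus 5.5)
The plan is to bound $m_{13}-m_{33}$ from above using only degree counting. Write $t=n_3$. In a tree with maximum degree $3$ the handshake identity gives $n_1 = n_3+2 = t+2$, so
$$
n = n_1+n_2+n_3 = 2t+2+n_2 .
$$
Since every edge $(1,3)$ contains a distinct leaf, we also have $m_{13}\le n_1=t+2$.

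The key step is a lower bound on $n_2$. Let $T^\ast$ be obtained from $T$ by deleting all leaves and then suppressing all vertices of degree $2$. The branching (degree-$3$) vertices of $T$ span a tree structure with exactly $t-1$ ``branch paths''. These are the maximal paths of $T$ whose two ends have degree $3$ and whose interior vertices all have degree $2$. Such a path is a single edge $(3,3)$ exactly when it has no interior vertex; otherwise it uses at least one degree-$2$ vertex, and distinct branch paths use disjoint interior vertices. Hence
$$
n_2 \ge (t-1)-m_{33}, \qquad\text{so}\qquad n \ge 3t+1-m_{33}.
$$
Combining the two bounds gives
$$
3(m_{13}-m_{33}) \le 3t+6-3m_{33} \le (n-1)+6-2m_{33}\le n+5 .
$$
Since $m_{13}-m_{33}$ is an integer, this yields $m_{13}-m_{33}\le \lfloor (n-1)/3\rfloor+2 = k+2$ for $n=3k+r$, $r\in\{1,2,3\}$. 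This proves $GRM_{-2}(T)=m_{33}-m_{13}\ge -(k+2)$. Sharpness already follows from the values computed for $T^1_{opt}(k)$, $T^2_{opt}(k)$ and $T^3_{opt}(k)$ above.

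For equality, I will track the three sources of slack:
\begin{itemize}
\item $n_1-m_{13}$, the number of leaves not adjacent to a degree-$3$ vertex;
\item $n_2-(t-1-m_{33})$, the number of extra degree-$2$ vertices, both on pendent paths and beyond the first interior vertex of a branch path;
\item $2m_{33}$.
\end{itemize}
Their weighted total $3(n_1-m_{13}) + \bigl(n_2-(t-1-m_{33})\bigr) + 2m_{33}$ equals $3(t+2-m_{13}+m_{33})-(n-1)-6+\ldots$, so it must be at most $0$, $1$ or $2$ according as $n=3k+1$, $3k+2$ or $3k+3$.

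For $n=3k+1$ all slack vanishes. Then every leaf hangs on a degree-$3$ vertex, $m_{33}=0$, and every branch path has exactly one interior vertex. For $n=3k+2$ exactly one branch path gets a second degree-$2$ vertex. For $n=3k+3$ the options are: two extra degree-$2$ vertices; a single $(3,3)$ edge, which corresponds to attaching leaves as in the second and third constructions of $T^3_{opt}(k)$; or one extra degree-$2$ vertex on a pendent path. That last option is incompatible with $m_{13}=n_1$ unless compensated, which I will check case by case.

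The main obstacle is this last step: translating ``zero or small slack'' into the specific families $T^i_{opt}(k)$. The counting argument does not by itself force the branching vertices to lie on a path. For instance, a degree-$3$ vertex whose three neighbours are degree-$2$ vertices, each leading to a degree-$3$ vertex carrying two leaves, appears to satisfy all equality conditions with $n=13$, $k=4$. So I would first carefully enumerate the equality structures, namely trees of branching vertices with every branch path subdivided once and leaves only at degree-$3$ vertices. I would then compare them with the definition of $T^1_{opt}(k)$, and either find an additional constraint forcing a caterpillar or conclude that the extremal family must be described as ``all such subdivided cubic-type trees''. The cases $n=3k+2$ and $n=3k+3$ then follow by locating the one or two units of slack in this structure.
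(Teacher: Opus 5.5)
Your proof of the inequality is correct, and it takes a different route from the paper. The paper argues by induction on $n$ through four local reductions: contracting a $(2,2)$ edge, deleting a leaf on a degree-$2$ vertex, and two ways of trimming the end of a longest path. It then gives a second, algebraic proof that solves the linear system in the $n_i,m_{ij}$ and splits into $n_3\le k$ versus $n_3\ge k+1$. You use only $n_1=n_3+2$, $m_{13}\le n_1$ and the structural bound $n_2\ge t-1-m_{33}$. This avoids both the transformations and the case split, and it gives an exact slack identity. With $s_1=n_1-m_{13}$ and $s_2=n_2-(t-1-m_{33})$,
\[
3(m_{13}-m_{33})=n+5-\bigl(3s_1+s_2+2m_{33}\bigr),
\]
so for $n=3k+r$ equality holds iff $3s_1+s_2+2m_{33}=r-1$. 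Your displayed expression for the weighted total is garbled; this is the identity you want.

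Two small repairs are needed:
\begin{itemize}
\item Justify the count of $t-1$ branch paths via the minimal subtree spanning the degree-$3$ vertices, not via $T^\ast$. Deleting leaves and then suppressing degree-$2$ vertices also suppresses every degree-$3$ vertex that carries exactly one leaf.
\item Your third option for $r=3$ cannot occur. A degree-$2$ vertex off the branch paths lies on a pendent path, which forces a $(1,2)$ edge, i.e.\ $s_1\ge 1$, and that alone costs $3>2$.
\end{itemize}

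On equality your suspicion is right: the statement fails, not your method. Your $13$-vertex tree has $m_{13}=6$ and $m_{33}=0$, so $GRM_{-2}=-6=-(k+2)$ with $k=4$. Yet its central degree-$3$ vertex has no leaf neighbour, so it is not $T^1_{opt}(4)$.

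A counterexample exists already at $n=12$. Let $c$ be adjacent to $b_1$ and to two degree-$2$ vertices $a_2,a_3$, let $a_i$ be adjacent to $b_i$, and hang two leaves on each of $b_1,b_2,b_3$. Then $m_{33}=1$, $m_{13}=6$ and $GRM_{-2}=-5=-(3+2)$. But $b_1,b_2,b_3$ do not lie on a common path, whereas in every member of $T^3_{opt}(3)$ all degree-$3$ vertices do.

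The paper's proof breaks in the equality step for Transformation 4. From $T'\equiv T^i_{opt}(k)$ it infers $T\equiv T^i_{opt}(k+1)$. However, the leaf $u$ of $T'$ adjacent to $t$ may be the pendent leaf of an interior degree-$3$ vertex. Undoing the transformation there, with $u$ still an end of a longest path of $T$, produces exactly these trees. The remark after the algebraic proof (``easily verified by an induction on $k$'') fails for the same reason: the degree and edge-type data do not force a caterpillar.

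So do not look for a constraint forcing a caterpillar; take the second branch of your alternative. Write $B$ for the tree on the degree-$3$ vertices whose edges are the branch paths. Your slack identity shows that the extremal trees are exactly those obtained from an arbitrary tree $B$ with $\Delta(B)\le 3$ by hanging $3-\deg_B(x)$ leaves on each $x\in V(B)$ and subdividing the edges of $B$ as follows:
\begin{itemize}
\item for $r=1$: $|B|=k$, and every edge is subdivided once;
\item for $r=2$: $|B|=k$, one edge is subdivided twice and the rest once;
\item for $r=3$: either $|B|=k$ with two surplus subdivision vertices (one edge subdivided three times, or two edges twice each), or $|B|=k+1$ with exactly one edge left unsubdivided and the rest subdivided once.
\end{itemize}
The converse is immediate from $m_{13}=\sum_{x}(3-\deg_B(x))=|B|+2$, together with the fact that $m_{33}$ equals the number of unsubdivided edges. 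The families $T^r_{opt}(k)$ are precisely the members with $B$ a path. Hence the stated characterization is correct for $7\le n\le 11$, where $|B|\le 3$ forces $B$ to be a path, and false for every $n\ge 12$.
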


\begin{proof}
The proof is based on the induction of $n$ with four tree transformations $T \rightarrow T'$. 
The base cases $n = 7, 8, 9$  can be proven directly by analyzing all small molecular trees.

Next, we describe a sequence of graph transformations applied in a given order, from 1 to 4, to the tree $T$ of order $n=3k+r$. The objective is to obtain a transformed tree $T'$ of order $m=3l+r_1$, $m<n$, and $r,r_1\in\{1,2,3\}$ ensuring that $GRM_{-2}(T) \geq -(k+2)$, given the assumption that $GRM_{-2}(T') \geq -(l+2)$.

\medskip

{\bf Transformation 1.} If $m_{22} > 0$, we can merge two adjacent vertices of degree $2$ in the tree $T$, resulting in a new tree denoted as $T'$. Since an edge of type $(2,2)$ does not contribute to the overall value of $GRM_{-2} (T)$, and all other edges remain unchanged, it follows that $GRM_{-2} (T) = GRM_{-2} (T')$.

\medskip

{\bf Transformation 2.} 
If $m_{12} > 0$, assume that the pendent vertex $v$ is a neighbor of $u$ with degree 2, which has another neighbor $w$.
If $w$ has degree 2, then $T$ contains edges of type $(2, 2)$, then Transformation 1 can be applied, resulting in a transformed tree $T'$,  implying that $GRM(T) = GRM(T')$.
If the degree of $w$ is equal to 1, then $T$ corresponds to the path $P_3$, which is not possible since $n \geq 7$.
On the other hand, if the degree of $w$ is equal to 3, removal of the vertex $v$ from $T$ results in a tree $T'$ that contains an additional edge of type $(1,3)$ while preserving the number of edges of type $(3,3)$ as in the original tree $T$. Consequently, the relation $GRM_{-2}(T) = GRM_{-2}(T') + 1$ holds.
\medskip

Consider the longest path in $T$, where $u$ is one of its endpoint vertices, namely, a leaf vertex of $T$, and $v$ is its unique adjacent vertex. The vertex $v$ must have degree 3; otherwise, applying Transformation 2 would produce a tree of a smaller order and consequently decrease $GRM_{-2} (T)$.
Furthermore, at least one of the two other neighbors of $v$ must have degree 1; otherwise, this would contradict the assumption that the longest path of $T$ ends at the vertex $u$. Let $u'$ denote the neighbor of $v$ of degree 1, and let $w$ be the neighbor of $v$ with a degree greater than 1.
\medskip

{\bf Transformation 3.} If the vertex $w$ has degree 3, we can remove the vertices $u$ and $u'$. The resulting tree, denoted $T'$, contains two fewer vertices than $T$. The tree $T'$ is obtained from $T$ by removing two edges of type $(1,3)$, specifically the edges $uv$ and $u'v$, and modifying one edge of type $(3,3)$, namely $vw$, into an edge of type $(1,3)$. Consequently, we conclude that $GRM(T) = GRM(T')$.

\medskip

{\bf Transformation 4.} Assume that $w$ has degree 2, and denote the other neighbor of $w$ as $t$. The degree of $t$ must be equal to $3$. If the degree of $t$ were $2$, Transformation 2 would be applied. The case where the degree of $t$ is $1$ is not possible, as it contradicts the assumption that $n \geq 7$. Now we can remove the vertices $v$, $u'$ and $w$ from the tree $T$, and then connect the pendent vertex $u$ to $t$. The new tree $T'$ has three vertices less than $T$, and the newly added edge creates one edge of type $(1, 3)$ while keeping the degree of $t$ 3. Therefore, $GRM_{-2}(T')$ has three vertices less than $T$ and $GRM_{-2}(T)=GRM_{-2}(T')-1$ . \medskip

By applying one of the first three transformations, it is possible to remove either one or two vertices of $T$ while ensuring that $GRM_{-2}(T)$ remains unchanged or decreases. Consequently, the induction hypothesis can be applied to the resulting tree $T'$. Specifically, according to the induction hypothesis, we have $GRM_{-2}(T')\geq -(l+2)$. Since the order of $T'$ is lower than that of $T$, it follows that $l\leq k$. Moreover, since $GRM_{-2}(T)$ is maintained or reduced, it necessarily follows that $GRM_{-2}(T)\geq GRM_{-2}(T')$. Combining these observations, we conclude that
$GRM_{-2}(T)\geq  GRM_{-2}(T')\geq -(l+2)\geq -(k+2)$.

By applying Transformation 4, we find that the order of $T'$ is given by $3(k-1)+r$. Consequently, according to the induction hypothesis, this implies $GRM_{-2}(T')\geq -(k-1+2)= -(k+1)$. Furthermore, using the relation $GRM_{-2}(T)=GRM_{-2}(T')-1$, we deduce that
$GRM_{-2}(T)\geq -(k+1)-1=-(k+2)$.

\medskip

The equality follows easily from Transformations 1, 3 and 4. 

If any of Transformations 1 to 3 is applied, we conclude that the equality holds if and only if $GRM_{-2}(T) = GRM_{-2}(T')$ and $l = k$.
The condition $GRM_{-2}(T) = GRM_{-2}(T')$ is satisfied when either Transformation 1 or Transformation 3 is applied.

If Transformation 1 is applied, then the order of the tree $T'$ is given by $n - 1 = 3k + (r - 1)$, where $1 \leq r \leq 3$. Since $l = k$, it follows that $r - 1 \geq 1$, which implies $1 \leq r - 1 \leq 2$.
For $r-1 = 1$, by the induction hypothesis, we have $T' \equiv T^1_{opt} (k)$, from which it follows that $T \equiv T^2_{opt} (k)$.
For $r-1 = 2$, by the induction hypothesis, we have $T' \equiv T^2_{opt} (k)$, from which it follows that $T$ is belongs to the class of $T^3_{opt} (k)$.

If Transformation 3 is applied, then the order of the tree $T'$ is given by $n - 2 = 3k + (r - 2)$, where $1 \leq r \leq 3$. Given that $l = k$, it follows that $r - 2 \geq 1$, which implies $ r - 2 =1$.
According to the induction hypothesis, we have $T' \equiv T^1_{opt} (k)$, from which it follows that $T$ is belongs to the class of $T^3_{opt} (k)$.

If Transformations 4 is applied, we conclude that the equality holds if and only if $GRM_{-2}(T) = GRM_{-2}(T')-1$ and $l = k-1$.
According to the induction hypothesis, we have $T' \equiv T^i_{opt} (k)$, from which it follows that $T\equiv T^i_{opt} (k+1)$, for $1\leq i\leq 3$.
\end{proof}







The minimum value of $GRM_{-2}(T)$, where $T$ belongs to the class of trees of order $n$ and maximum degree $\Delta = 3$, can be determined using an algebraic approach. This method can serve as a foundation for identifying the minimum value of $GRM_{-2}(T)$ when $T$ belongs to the class of trees of order $n$ and maximum degree $\Delta = 4$.

\begin{theorem}
Let $T$ denote a tree with maximum degree $3$ and order $n\geq 7$. 
It follows that $GRM_{-2}(T)\geq -\lfloor \frac{n-1}{3} \rfloor-2=-(k+2)$. 
\end{theorem}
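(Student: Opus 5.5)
We argue algebraically, using only the counting identities for $n_i$ and $m_{i,j}$ in a tree with $\Delta=3$; no induction is needed. By the observation preceding the previous theorem, $GRM_{-2}(T)=m_{33}-m_{13}$, so it suffices to prove
$$m_{13}-m_{33}\leq \left\lfloor \tfrac{n-1}{3}\right\rfloor+2 .$$

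First we write the standard relations. The vertex count and the handshake count give
$$n_1+n_2+n_3=n,\qquad n_1+2n_2+3n_3=2(n-1).$$
Eliminating $n_2$ yields $n_1=n_3+2$; eliminating $n_1$ yields $n_2+2n_3=n-2$. The degree-incidence counts are
$$n_1=m_{12}+m_{13}\quad(\text{for } n\geq 3,\ m_{11}=0),\qquad 3n_3=m_{13}+m_{23}+2m_{33}.$$

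Next we bound $m_{13}$ in two ways.
\begin{itemize}
\item From $m_{13}\leq n_1=n_3+2$ we get $m_{13}-m_{33}\leq n_3+2-m_{33}$.
\item Every degree-3 vertex has at most $3$ incident edges, which gives $m_{13}+2m_{33}\leq 3n_3$.
\end{itemize}
These alone do not give the claim, so the real task is to bound $n_3-m_{33}$ by $\lfloor (n-1)/3\rfloor$. Put $q=n_3-m_{33}$. Consider the subforest $F$ induced by the degree-3 vertices. It has $n_3$ vertices and $m_{33}$ edges, so it has exactly $c=n_3-m_{33}$ components, i.e. $q=c$ is the number of components of $F$. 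We then need to show $3c\leq n-1$ when $m_{13}$ is large.

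To do so, contract each component of $F$ to a single vertex and count the degree-2 vertices needed to join them. Joining the $c$ components into one tree requires $c-1$ connecting paths. Each such path contains at least one vertex of degree $\le 2$; since leaves cannot lie inside a path, each contains at least one degree-2 vertex. Hence $n_2\geq c-1$. Combined with $n=n_1+n_2+n_3=2n_3+2+n_2$, this gives
$$n\geq 2n_3+c+1\geq 3c+1$$
(because $n_3\geq c$), i.e. $c\leq (n-1)/3$. Since $c$ is an integer, $c\leq\lfloor (n-1)/3\rfloor$. Therefore
$$m_{13}-m_{33}\leq n_1-m_{33}=n_3+2-m_{33}=c+2\leq \left\lfloor \tfrac{n-1}{3}\right\rfloor+2,$$
which is the claim.

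The main obstacle is the step $n_2\geq c-1$: we must argue carefully that distinct components of the degree-3 subforest are separated in $T$ by paths whose interior vertices all have degree $2$. A single degree-2 vertex lies on only one such path, and degree-1 vertices never occur as interior vertices. Once that is in place, everything else is linear bookkeeping with the identities above. Consistency with the extremal trees $T^1_{opt}(k)$ is visible: there $c=n_3=k$, $m_{33}=0$, $n_2=k-1=c-1$, and $m_{13}=n_1$, so every inequality in the chain is tight.
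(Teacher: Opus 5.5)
Your proof is correct, and it takes a different route from the paper's.

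\textbf{What the paper does.} It solves the counting system and, using $m_{12}\ge 0$, obtains $m_{13}-m_{33}\le n-2n_3-m_{22}+1$. This bound decreases in $n_3$. It then balances it against the bound $m_{13}-m_{33}\le n_1=n_3+2$, which increases in $n_3$, by splitting into the cases $n_3\ge k+1$ and $n_3\le k$.

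\textbf{What you do differently.} You use only the second bound, but you keep the $-m_{33}$ term. Reading $n_3-m_{33}$ as the number $c$ of components of the degree-$3$ forest, you replace the case split with the single structural inequality $n_2\ge c-1$. Your proof is then one chain in which every source of slack is visible: $m_{12}$, $n_2-(c-1)$, $n_3-c$, and the floor. It also shows naturally what extremal trees look like: components of degree-$3$ vertices separated by single degree-$2$ vertices. The paper's version is more mechanical, but it is the approach the paper reuses for $\Delta=4$.

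\textbf{Your ``main obstacle'' is an identity.} It follows from the same counting equations. Combining $m_{12}+2m_{22}+m_{23}=2n_2$, $m_{13}+m_{23}+2m_{33}=3n_3$ and $m_{12}+m_{13}=n_3+2$ gives $n_2-(c-1)=m_{12}+m_{22}$. Together with $n=2n_3+n_2+2$, this yields $n-1-3c=m_{12}+m_{22}+2m_{33}$ and $m_{13}-m_{33}=c+2-m_{12}$. So you can drop the informal path-counting step. Alternatively, you can state it precisely: delete the leaves of $T$ and contract each component of $F$. In the resulting tree the $c$ contracted vertices are pairwise non-adjacent, and every other vertex has degree at most $2$, so $c+n_2-1\le 2n_2$.

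\textbf{Equality case.} Writing $n=3k+r$, equality holds exactly when $m_{12}=0$ and $m_{22}+2m_{33}=r-1$. This recovers in one step the paper's separate degree-sequence analysis of the extremal trees.
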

\begin{proof}
The following equations hold for any tree $T$ with a maximal degree of $3$

\begin{eqnarray}
    &&n_1 + n_2 + n_3  = n \nonumber\\ 
    &&n_1 + 2n_2 + 3n_3 = 2n-2 \nonumber\\ 
    &&m_{12} + m_{13}  = n_1 \label{1st}\\
    &&m_{12} + 2m_{22} + m_{23}  = 2n_2 \nonumber \\ 
    &&m_{13} + m_{23} + 2m_{33}  = 3n_3 \label{second}\\  \nonumber
\end{eqnarray}

Using the previous system, we observe that we can express the variables $n_1$, $n_2$, $m_{12}$, $m_{13}$, and $m_{33}$ in terms of $n_3$, $m_{22}$, and $m_{23}$.

\begin{eqnarray}
    n_1 &=& 2 + n_3\label{2nd}\\
    n_2 &=& n - 2 - 2n_3\label{second'}\\
    m_{33}&=&n-n_3-3-m_{22} - m_{23} \label{third}\\
    m_{13}&=&5n_3+2m_{22} + m_{23}-2n+6 \label{4th}\\
    m_{12}&=&2n-4n_3-2m_{22} - m_{23}-4\label{5th}.\\ \nonumber
\end{eqnarray}

By subtracting   (\ref{third}) from (\ref{4th}) we can deduce that 
\begin{eqnarray}
    m_{13}-m_{33}&=& 6n_3-3n+3m_{22}+2m_{23}+9 \label{6th}.  \\ \nonumber
 \end{eqnarray}

From (\ref{5th}),  it can be concluded that 
$m_{23}\leq  2n-4n_3-2m_{22} -4$, under the assumption that $m_{12} \geq 0$.
Combining this inequality with equation (\ref{6th}) we deduce that
\begin{eqnarray}
m_{13}-m_{33}\leq n-2n_3-m_{22}+1.\label{7th}
\end{eqnarray}
Now let $n=3k+r$, for $1\leq r\leq 3$.

\medskip

Suppose that $n_3\geq \lfloor \frac{n-1}{3} \rfloor+1=k+1$.

Using (\ref{7th}) and the fact that $n_3\geq k+1$ we obtain that 
\begin{eqnarray}
\label{inequality2}
m_{13}-m_{33}&\leq& (3k+r)-2(k+1)-m_{22}+1\leq k+r-1\leq k+2.\\\nonumber
\end{eqnarray}
The equality holds if and only if $m_{12}=m_{22}=0$ and $n_3=k+1$.

\medskip

Suppose that $n_3\leq \lfloor \frac{n-1}{3} \rfloor=k$.

According to (\ref{1st}) and (\ref{2nd}) we get that
\begin{eqnarray}
\label{inequality1}
    m_{13}-m_{33}&\leq & n_1=n_3+2\leq k+2.\\\nonumber
\end{eqnarray}


\end{proof}

Based on the proof of the theorem, it is possible to determine the degree sequences of trees for which the equality $GRM_{-2}(T) = -(k-2)$ holds.

In (\ref{inequality1}), equality holds if and only if $m_{12}=m_{33}=0$, $n_3=k=\lfloor\frac{n-1}{3} \rfloor$, and $n_1=m_{13}=k+2=\lfloor \frac{n-1}{3} \rfloor+2$. Referring to (\ref{second}), we deduce that $m_{23}=3n_3-m_{13}-2m_{33}=2k-2$. Furthermore, employing (\ref{second'}), we derive $n_2=n-2-2n_3=3k+r-2-2k=k+r-2$. Finally, given that (\ref{third}) holds, we conclude that

 $$
 m_{22}=n-n_3-3-m_{23}-m_{33}=3k+r-k-3-2k+2=r-1.
 $$

However, it can be concluded that equality holds in (\ref{inequality2}) when $r=3$. In fact, equality is achieved when $m_{12}=m_{22}=0$ and $n_3=k+1= \lfloor \frac{n-1}{3} \rfloor+1$. With $n_1=n_3+2=k+3$, using (\ref{1st}) yields $m_{13}=n_1-m_{12}=k+3$. According to (\ref{third}) and (\ref{5th}), it is deduced that $m_{23}+m_{33}=n-n_3-3=2k-1$ and $m_{23}=2n-4n_3-4=2k-2$, respectively. Finally, $n_2=n-2-2n_3=n-2k-4$ is confirmed, thus characterizing all degree sequences and types of edges optimal trees possess concerning minimum $GRM_{-2}(T)$.

It can be easily verified by an induction on $k$ that the optimal tree $T$ satisfies one of the following:
$T\equiv T^1_{\text{opt}}(k)$ or $T\equiv T^2_{\text{opt}}(k)$ or $T\equiv T^3_{\text{opt}}(k)$.

\bigskip

\section{Minimum value of $GRM_{-2}$ in the class of molecular trees with  maximal degree 4}

The following equations hold for any tree with maximal degree $4$ of order~$n$ 

\begin{align}\label{sistem}
\begin{split}
    &n_1 + n_2 + n_3 + n_4 = n,\\
    &n_1 + 2n_2 + 3n_3 + 4n_4 = 2(n-1),\\
    &m_{12} + m_{13} + m_{14} = n_1,\\
    &m_{12} + 2m_{22} + m_{23} + m_{24} = 2n_2,\\
    &m_{13} + m_{23} + 2m_{33} + m_{34} = 3n_3,\\
    &m_{14} + m_{24} + m_{34} + 2m_{44} = 4n_4.\\
\end{split}
\end{align}

Using the system above, we note the possibility of expressing the variables $n_1$, $n_2$, $n_4$, $m_{14}$, $m_{24}$, and $m_{33}$ in relation to the remaining variables.

\begin{eqnarray}
    n_1 &=& -\frac{m_{12}}{2} - \frac{m_{13}}{4} - \frac{m_{22}}{2} - \frac{m_{23}}{4} +\frac{m_{34}}{4} + \frac{m_{44}}{2} + \frac{n}{2} + \frac{n_3}{4} + \frac{3}{2}\label{prva}\\
    n_2 &=& \frac{3m_{12}}{4} + \frac{3m_{13}}{8} + \frac{3m_{22}}{4} + \frac{3m_{23}}{8} - \frac{3m_{34}}{8} - \frac{3m_{44}}{4} + \frac{n}{4} - \frac{7n_3}{8} - \frac{5}{4}\label{druga}\\
    n_4 &=& -\frac{m_{12}}{4} - \frac{m_{13}}{8} - \frac{m_{22}}{4} - \frac{m_{23}}{8} + \frac{m_{34}}{8} + \frac{m_{44}}{4} + \frac{n}{4} - \frac{3n_3}{8} - \frac{1}{4}\\
    m_{14}&=&-\frac{3m_{12}}{2} - \frac{5m_{13}}{4} - \frac{m_{22}}{2} - \frac{m_{23}}{4} + \frac{m_{34}}{4} + \frac{m_{44}}{2} + \frac{n}{2} + \frac{n_3}{4} + \frac{3}{2} \\
    m_{24}&=&\frac{m_{12}}{2} + \frac{3m_{13}}{4} - \frac{m_{22}}{2} - \frac{m_{23}}{4} - \frac{3m_{34}}{4} - \frac{3m_{44}}{2} + \frac{n}{2} - \frac{7n_3}{4} - \frac{5}{2}\label{peta} \\
     m_{33}&=&-\frac{m_{13}}{2}- \frac{m_{23}}{2}-\frac{m_{34}}{2}+\frac{3n_3}{2}.\label{sesta}\\ \nonumber
\end{eqnarray}

Using the formula (\ref{GRM_{-2}(T)}) and given that  $a_{13} = -1$, $a_{14}=-2$, $a_{33} = 1$, $a_{34} = 2$, $a_{44} = 4$ and $a_{2j}=0$, we find that 
$$
-GRM_{-2}(T)= m_{13}+2m_{14}-m_{33}-2m_{34}-4m_{44},
$$
for any tree $T$ with a maximal degree of $\Delta=4$. Now, substituting $m_{14}$ and $m_{33}$ from the relations above into the formula of $-GRM_{-2}(T)$ we obtain that
\begin{eqnarray}
\label{main2}
-GRM_{-2}(T)=-3m_{12}- m_{13}-m_{22}-m_{34}-3m_{44}+n-n_3+3.
\end{eqnarray}

We can now conclude that $GRM_{-2}(T)$ attains its minimum when the conditions $m_{12}=m_{13}=m_{22}=m_{34}=m_{44}=n_3=0$ are satisfied, which further implies that $m_{23}=m_{33}=0$. The minimum value obtained under these conditions is $GRM_{-2}(T)=-(n+3)$.
By considering equations (\ref{prva})–(\ref{peta}), this minimum is achieved for the values $n_1=\frac{n+3}{2}$, $n_2=\frac{n-5}{4}$, $n_4=\frac{n-1}{4}$, $m_{14}=\frac{n+3}{2}$, and $m_{24}=\frac{n-5}{2}$. Consequently, these trees exist for orders satisfying the condition $n\equiv 1 \pmod 4$. Expressing $n$ in the form $n=4k+1$ provides a more convenient representation: $n_1=2k+2$, $n_2=k-1$, $n_4=k$, $m_{14}=2k+2$, and $m_{24}=2k-2$.

\smallskip

Let $TT^1_{opt}(k)$ denote the unique tree of order $n = 4k + 1$, which consists of a path $P_{2k+1} = v_1 v_2 \ldots v_{2k+1}$, where exactly two pendant vertices are attached to each vertex $v_i$ for every even index $1 \leq i \leq 2k+1$. By applying induction on $k$, it can be demonstrated that any tree $T$ with the degree sequence given by $n_1=2k+2$, $n_2=k-1$, $n_4=k$, $m_{14}=2k+2$, $m_{24}=2k-2$, and satisfying $n_3=m_{12}=m_{22}=m_{44}=0$, is isomorphic to $TT^1_{opt}(k)$. For $k=1$, it is evident that $T$ corresponds to the star $S_4$, which is, in fact, the optimal tree $TT^1_{opt}(1)$.

Now, assume that any tree $T'$ of order $n = 4k + 1$ with the given degree sequence is isomorphic to $TT^1_{opt}(k)$. Consider an arbitrary tree $T$ of order $n = 4(k+1) + 1$ with the degree sequence $n_1=2(k+1)+2$, $n_2=k$, $n_4=k+1$, $m_{14}=2(k+1)+2$, $m_{24}=2k$, and satisfying $n_3=m_{12}=m_{22}=m_{44}=0$.
Consider the longest path in $T$, where $u$ is one of its endpoint vertices, namely a leaf vertex of $T$, and $v$ is its unique adjacent vertex. The vertex $v$ must have degree 4.
Furthermore,  two of the three remaining neighbors of $v$ must have a degree of 1. If this were not the case, it would contradict the assumption that the longest path in $T$ terminates at the vertex $u$. The fourth neighbor must have degree 2. 
If we remove vertex $v$ along with all its neighboring vertices that are leaves, we obtain a tree $T'$ of order $n = 4k + 1$. Specifically, we remove three edges of the type $(1,4)$ and one edge of the type $(2,4)$, and we replace one edge of the type $(2,4)$ with an edge of type $(1,4)$.  Consequently, in the transition from $T$ to $T'$, the number of edges of both types $(1,4)$ and $(2,4)$ decreases by 2.
 As a result, the number of edges of type $(1,4)$ is given by $m'_{14} = m_{14} - 2 = 2k + 2$, while the number of edges of type $(2,4)$ is given by $m'_{24} = m_{24} - 2 = 2k - 2$. Moreover, we have actually removed three vertices of degree 1, one vertex of degree 4, and modified the degree of one vertex from 2 to 1. Consequently, during the transition from $T$ to $T'$, the number of leaves decreases by 2, the number of vertices of degree 2 decreases by 1, and the number of vertices of degree 4 decreases by 1 as well. This implies that $n'_1 = n_1 - 2 = 2k + 2$, $n'_2 = n_2 - 1 = k - 1$, and $n'_4 = n_4 - 1 = k$.
By the induction hypothesis, we have that $T' \equiv TT^1_{opt}(k)$. After adding the vertex $v$ along the three leaves attached to it in the described manner, we conclude that $T\equiv TT^1_{opt}(k+1)$.

\bigskip

In the preceding discussion, we have demonstrated that $GRM_{-2}(T)$ attains its minimum value of $-(n+3) = -(4k+4)$ if and only if $n = 4k + 1$ and $T \equiv TT^1_{opt}(k)$.
Therefore, according to (\ref{main2}), if $n \neq 4k + 1$, it follows that $GRM_{-2}(T) \geq -(n+2)$.

If $n=4k+2$, the function $GRM_{-2}(T)$ attains the value $-(n+2)=-(4k+4)$ in (\ref{main2}) if and only if all summands with negative signs are equal to zero, except for exactly one.
Since the conditions $m_{13}>0$ or $m_{34}>0$ imply that $n_3>0$, two possible cases arise: either $n_3=1$ while all other summands remain zero, or $m_{22}=1$ while all other summands remain zero.
In the case where $n_3=1$, it follows that $m_{33}=0$. Otherwise, there would be at least two vertices of degree 3, which contradicts the initial assumption. Consequently, we conclude that $m_{23}>0$. Furthermore, by applying equation (\ref{sesta}), we deduce that $m_{23}=3$. Finally, based on the (\ref{druga}), we obtain that $n_2=\frac{n}{4}-1$, which contradicts that fact that $n_2$ is an integer, as $n\equiv 4\pmod 2$.

If $m_{22}=1$, then, by analyzing equations (\ref{prva})–(\ref{peta}), we deduce that the optimal tree $T$ has the following parameters: $n_1=2k+2$, $n_2=k$, $n_4=k$, $m_{22}=1$, $m_{14}=2k+2$, and $m_{24}=2k-2$.
If $TT^2_{opt} (k)$ represents a family of $\lfloor k / 2 \rfloor$ trees of order $n = 4k+2$, obtained from $TT^{1}_{opt} (k)$ by subdividing an edge $v_i v_{i+1}$ for odd values of $i$ in the range $3 \leq i \leq 2k-1$, then, by applying the induction previously established for the case $n \equiv 1 \pmod{4}$, it can be proved that any tree with the aforementioned parameters belongs to $T^2_{opt} (k)$.
Therefore, according to (\ref{main2}), if $n \not\equiv  \{1,2\}\pmod 4$, it follows that $GRM_{-2}(T) \geq -(n+1)$.

\bigskip

If $n=4k+3$, an analysis of the function $GRM_{-2}(T)$ in (\ref{main2}), following a similar case-based approach as in the previous considerations, leads to the conclusion that it attains the value $-(n+1)=-(4k+4)$ in (\ref{main2}) if and only if all summands with negative signs are equal to zero, except for $m_{22}=2$.
By analyzing equations (\ref{prva})–(\ref{peta}), we deduce that the optimal tree $T$ possesses the following parameters: $n_1=2k+2$, $n_2=k+1$, $n_4=k$, $m_{22}=2$, $m_{14}=2k+2$, and $m_{24}=2k-2$.
Let $TT^3_{opt}(k)$ denote a family of trees of order $n = 4k + 3$, obtained from $TT^2_{opt}(k)$  by subdividing an edge $v_i v_{i+1}$ such that $deg(v_i) \geq 2$ and $deg(v_{i+1}) = 2$.
By applying the previously established induction used in the cases when $n \equiv \{1,2\} \pmod{4}$, it can be shown that any tree with the aforementioned parameters belongs to $TT^3_{opt}(k)$.
Therefore, based on equation (\ref{main2}), if $n \not\equiv \{1, 2, 3\} \pmod{4}$, it follows that $GRM_{-2}(T) \geq -n$.

\bigskip

If $n = 4k + 4$, an analysis of the function $GRM_{-2}(T)$ in equation (\ref{main2}), using a case-based approach similar to the previous considerations, leads to the conclusion that it takes the value $-n = -(4k + 4)$ in equation (\ref{main2}) if one of the following conditions is satisfied: either all summands with negative signs are equal to zero, except for $m_{22} = 3$, or all summands with negative signs are equal to zero, except for $m_{44} = 1$.
By analyzing equations (\ref{prva})–(\ref{peta}), we conclude that the optimal tree, denoted as $T$, has the following parameters in the first case: $n_1 = 2k + 2$, $n_2 = k + 2$, $n_4 = k$, $m_{22} = 3$, $m_{14} = 2k + 2$, and $m_{24} = 2k - 2$.
In the second case, the optimal tree $T$ has the following parameters: $n_1 = 2k + 4$, $n_2 = k - 1$, $n_4 = k + 1$, $m_{44} = 1$, $m_{14} = 2k + 4$, and $m_{24} = 2k - 2$.
Let $TT^4_{opt}(k)$ denote a family of trees of order $n = 4k + 4$, obtained from $TT^3_{opt}(k)$ either by subdividing an edge $v_i v_{i+1}$ such that $deg(v_i) \geq 2$ and $deg(v_{i+1}) = 2$, or by attaching three pendent vertices to $TT^1_{opt}(k)$ at $v_1$, or by attaching two pendent vertices to $TT^2_{opt}(k)$  at one of the vertices $v_i$ or $v_{i+1}$ for which $deg(v_i) = deg(v_{i+1}) = 2$.
By applying the previously established induction used in the cases when $n \equiv \{1,2,3\} \pmod{4}$, it can be concluded that a tree with the any of aforementioned parameters belongs to $TT^4_{opt}(k)$.

\bigskip



%
%

\vspace{0.5cm}

\section{Conclusion}

In this paper, we extended and corrected the known lower bounds for the general reduced second Zagreb index $GRM_\lambda(T)$ among trees of given order $n$ and maximum degree $\Delta$, particularly refining the extremal conditions for the case $\lambda \geq -1$. Our results complete the characterization of the extremal trees for $\lambda = -1$, which was previously only partially resolved in \cite{dehgardia2023}. In addition, for $\lambda = -2$ and trees with maximum degree $\Delta = 3$, we established sharp bounds and a complete structural characterization of the extremal trees via two complementary approaches. The case $\Delta = 4$ was also addressed using a similar algebraic technique, yielding precise results within that class.

Both inductive and algebraic methods used for the case $\lambda = -2$ required careful case analysis and intricate structural arguments. Extending these techniques to trees with arbitrary maximum degree $\Delta$ would introduce significantly more complexity, as the number of structural cases grows rapidly. Consequently, determining the minimal value of $GRM_{-2}(T)$ for general $\Delta \geq 5$ remains an open and challenging problem that we leave for future investigation.

\section*{Declaration of competing interest}

The authors declare that they have no known competing financial
interests or personal relationships that could have appeared to
influence the work reported in this paper.

\section*{Data availability}

No data was used for the research described in the article.

\section*{Acknowledgments}

Authors gratefully acknowledge support from the Research Project
of the Ministry of Education, Science and Technological Development of the Republic of Serbia (number 451-03-137/2025-03/ 200124).

\end{document}